\newtheorem{theorem}{Theorem}[section]
\newtheorem{problem}[theorem]{Problem}
\newtheorem{corollary}[theorem]{Corollary}
\newtheorem{lemma}[theorem]{Lemma}
\theoremstyle{definition}
\newtheorem{definition}[theorem]{Definition}
\newtheorem{remark}[theorem]{Remark}
\newcommand{\w}{\omega}
\newcommand{\e}{\varepsilon}
\newcommand{\IF}{\mathbb F}
\newcommand{\IN}{\mathbb N}
\newcommand{\IR}{\mathbb R}
\title{Banach actions preserving unconditional convergence}
\author{Taras Banakh and Vladimir Kadets}
\address{Ivan Franko National University of Lviv (Ukraine) and Jan Kochanowski University in Kielce (Poland); \href{http://orcid.org/0000-0001-6710-4611}{ORCID: \texttt{0000-0001-6710-4611} }}
\email{t.o.banakh@gmail.com, taras.banakh@lnu.edu.ua, taras.banakh@ujk.edu.pl}
\address{V.N.Karazin Kharkiv National University,
4 Svobody sq., Kharkiv, 61022, Ukraine \newline
\href{http://orcid.org/0000-0002-5606-2679}{ORCID: \texttt{0000-0002-5606-2679} }}
\email{v.kateds@karazin.ua}
\keywords{Banach action; unconditional convergence; absolutely summing operator}
\subjclass{46B15; 46B45}
\begin{document}

\begin{abstract}Let $A,X,Y$ be Banach spaces and $A\times X\to Y$, $(a,x)\mapsto ax$, be a continuous bilinear function, called a {\em Banach action}. We say that  this action {\em preserves unconditional convergence} if for every bounded sequence $(a_n)_{n\in\w}$ in $A$ and unconditionally convergent series $\sum_{n\in\w}x_n$ in $X$ the series $\sum_{n\in\w}a_nx_n$ is unconditionally convergent in $Y$. We prove that a Banach action $A\times X\to Y$ preserves unconditional convergence if and only if for any linear functional $y^*\in Y^*$ the operator $D_{y^*}:X\to A^*$, $D_{y^*}(x)(a)=y^*(ax)$, is absolutely summing. Combining this characterization with the famous Grothendieck theorem on the absolute summability of operators from $\ell_1$ to $\ell_2$, we prove that a Banach action $A\times X\to Y$ preserves unconditional convergence if $A$ is a Hilbert space possessing an orthonormal basis $(e_n)_{n\in\w}$ such that for every $x\in X$ the series $\sum_{n\in\w}e_nx$ is weakly absolutely convergent. Applying known results of Garling on the absolute summability of diagonal operators between sequence spaces, we prove that for (finite or infinite) numbers $p,q,r\in[1,\infty]$ with $\frac1r\le\frac1p+\frac1q$, the coordinatewise multplication $\ell_p\times\ell_q\to\ell_r$ preserves unconditional convergence if and only if one of the following conditions holds: (i) $p\le 2$ and $q\le r$, (ii) $2<p<q\le r$, (iii) $2<p=q<r$, (iv) $r=\infty$, (v) $2\le q<p\le r$, (vi) $q<2<p$ and $\frac1p+\frac1q\ge\frac1r+\frac12$.
\end{abstract}
\maketitle

\section{Introduction}

By a {\em Banach action} we understand any continuous bilinear function $A\times X\to Y$, $(a,x)\mapsto ax$, defined on the product $A\times X$ of Banach spaces $A,X$ with values in a Banach space $Y$. The Banach space $A$ is called the {\em acting space} of the action $A\times X\to Y$. 

We say that a Banach action $A\times X\to Y$ {\em preserves unconditional convergence} if for any unconditionally convergent series $\sum_{n\in\w}x_n$ in $X$ and any bounded sequence $(a_n)_{n\in \w}$ in $A$ the series $\sum_{n\in\w}a_nx_n$ converges unconditionally in the Banach space $Y$. Let us recall \cite[1.c.1]{LT} that a series $\sum_{n\in\w}x_n$ in a Banach space $X$ {\em converges unconditionally} if for any permutation $\sigma$ of $\w=\{0,1,2,\dots\}$ the series $\sum_{n\in\w}x_{\sigma(n)}$ converges in $X$.

 Observe that the operation of multiplication $X\times X\to X$, $(x,y)\mapsto xy$, in a Banach algebra $X$ is a Banach action. The problem of recognition of Banach algebras whose multplication preserves unconditional convergence has been considered in the paper \cite{BR}, which motivated us  to explore the following general question. 
 
\begin{problem}\label{prob1} Given a Banach action, recognize whether it  preserves unconditional convergence.
\end{problem}

This problem is not trivial even for the Banach action $\ell_p\times\ell_q\to\ell_r$ assigning to every pair $(x,y)\in\ell_p\times\ell_q$ their coordinatewise product $xy\in\ell_r$. The classical H\"older inequality implies that the coordinatewise multiplication $\ell_p\times\ell_q\to\ell_r$ is well-defined and continuous for any (finite or infinite) numbers $p,q,r\in[1,\infty]$ satisfying the inequality $\frac1r\le\frac1p+\frac1q$. 

Let us recall that $\ell_p$ is the Banach space of all sequences $x:\w\to\IF$ with values in the field $\IF$ of real or complex numbers such that $\|x\|_{\ell_p}<\infty$ where 
$$
\|x\|_{\ell_p}=
\begin{cases}
\big(\sum_{n\in\w}|x(n)|^p\big)^{\frac1p}<\infty&\mbox{if $p\in[1,\infty)$};\\
\phantom{\,}\sup_{n\in \w}|x(n)|&\mbox{if $p=\infty$.}
\end{cases}
$$

One of the main results of this paper is the following theorem answering Problem~\ref{prob1} for the Banach actions $\ell_p\times\ell_q\to\ell_r$.

\begin{theorem}\label{t:main2} For numbers $p,q,r\in[1,\infty]$ with $\frac1{r}\le\frac{1}p+\frac{1}q$, the coordinatewise multiplication $\ell_p\times\ell_q\to\ell_r$ preserves unconditional convergence if and and only if one of the following conditions is satisfied:
\begin{itemize}
\item[{\rm(i)}] $p\le 2$ and $q\le r$.
\item[{\rm(ii)}] $2<p<q\le r$; 
\item[{\rm(iii)}] $2<p=q<r$;
\item[{\rm(iv)}] $p=q=r=\infty$;
\item[{\rm(v)}] $2\le q<p\le r$;
\item[{\rm(vi)}] $q<2<p$ and $\frac1p+\frac1q\ge\frac1r+\frac12$.
\end{itemize}
\end{theorem}

Theorem~\ref{t:main2} implies the following characterization whose ``only if'' part is due to Daniel Pellegrino (private communication), who proved it using the results of Bennett \cite{Bennett}.

\begin{corollary}\label{c:main} For a number $p\in[1,\infty]$,  the coordinatewise multiplication $\ell_p\times\ell_p\to\ell_p$ preserves unconditional convergence if and only if $p\in[1,2]\cup\{\infty\}$.
\end{corollary}

The other principal result of the paper is the following partial answer to Problem~\ref{prob1}.

\begin{theorem}\label{t:main1} A Banach action $A\times X\to Y$ preserves unconditional convergence if $A$ is a Hilbert space possessing an orthonormal basis $(e_n)_{n\in\w}$  such that for every $x\in X$ the series $\sum_{n\in\w}e_nx$ is unconditionally convergent in $Y$. 
\end{theorem}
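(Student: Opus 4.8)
The plan is to verify the Cauchy criterion for unconditional convergence: a series $\sum_{n\in\w}y_n$ in $Y$ converges unconditionally if and only if for every $\e>0$ there is a finite set $F_0\subseteq\w$ such that $\|\sum_{n\in F}y_n\|<\e$ for every finite $F\subseteq\w\setminus F_0$. So, fixing a bounded sequence $(a_n)_{n\in\w}$ in $A$ with $M:=\sup_n\|a_n\|$ and an unconditionally convergent series $\sum_{n\in\w}x_n$ in $X$, I must estimate $\|\sum_{n\in F}a_nx_n\|$ for finite sets $F$ lying in a tail of $\w$. First I would extract a uniform constant from the hypothesis: for each fixed $x\in X$ the series $\sum_n e_nx$ is unconditionally convergent, hence weakly unconditionally Cauchy, so $\sup\{\|\sum_{k\in G}\e_ke_kx\|: G\text{ finite},\ |\e_k|\le1\}<\infty$. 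Since every partial operator $x\mapsto\sum_{k\in G}\e_ke_kx$ is bounded (the action is continuous), the Banach--Steinhaus theorem yields a constant $C$ with $\|\sum_{k\in G}\e_ke_kx\|\le C\|x\|$ for all finite $G$ and all scalars $|\e_k|\le1$. Testing this against $f\in Y^*$ and aligning phases of the $\e_k$ shows, for every $f$, that the linear map $S_f\colon X\to\ell_1$, $S_f(x):=(f(e_kx))_{k\in\w}$, is well defined and bounded with $\|S_f\|\le C\|f\|$.

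The heart of the argument is the Grothendieck step. Writing $a_n=\sum_k\langle a_n,e_k\rangle e_k$ and using continuity of the action, for $f\in Y^*$ with $\|f\|\le1$ one gets $f(a_nx_n)=\sum_k\langle a_n,e_k\rangle f(e_kx_n)=\langle \alpha_n,\beta_n\rangle$, where $\alpha_n:=(\langle a_n,e_k\rangle)_k$ has $\|\alpha_n\|_2=\|a_n\|\le M$ and $\beta_n:=S_f(x_n)$. By Cauchy--Schwarz $|f(a_nx_n)|\le M\|\beta_n\|_2$, so it suffices to bound $\sum_{n\in F}\|\beta_n\|_2$. By the Grothendieck Inequality the formal inclusion $j\colon\ell_1\to\ell_2$ (of norm $1$) is absolutely (that is, $1$-)summing with $\pi_1(j)\le K_G$; hence the composition $j\circ S_f\colon X\to\ell_2$ is $1$-summing with $\pi_1(j\circ S_f)\le\pi_1(j)\,\|S_f\|\le K_GC\|f\|\le K_GC$. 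The defining inequality of $1$-summing operators then gives $\sum_{n\in F}\|\beta_n\|_2=\sum_{n\in F}\|(j\circ S_f)x_n\|_2\le K_GC\cdot\sup_{\|\varphi\|\le1}\sum_{n\in F}|\varphi(x_n)| = K_GC\cdot w(F)$, where $w(F):=\sup\{\|\sum_{n\in F}\e_nx_n\|:|\e_n|\le1\}$ is the weak-unconditional-Cauchy quantity of $(x_n)_{n\in F}$.

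Combining the two estimates, $\|\sum_{n\in F}a_nx_n\|=\sup_{\|f\|\le1}|\sum_{n\in F}f(a_nx_n)|\le MK_GC\,w(F)$. Finally, because $\sum_n x_n$ converges unconditionally, the partial sums $\sum_n\e_nx_n$ converge uniformly over all scalar sequences $(\e_n)$ with $|\e_n|\le1$; equivalently $w(F)\to0$ as $F$ runs through finite subsets of ever-later tails of $\w$. This furnishes, for each $\e>0$, the finite set $F_0$ required by the Cauchy criterion, and hence $\sum_{n\in\w}a_nx_n$ converges unconditionally.

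The one genuinely nontrivial step is the passage from the per-term bound $\|\beta_n\|_1\le C\|x_n\|$ to the summed bound $\sum_{n\in F}\|\beta_n\|_2\le K_GC\,w(F)$: a direct H\"older estimate only yields the majorant $M\sum_{n\in F}\|x_n\|$, which may well diverge for an unconditionally convergent series, and it is precisely Grothendieck's theorem — that $j\colon\ell_1\to\ell_2$ is $1$-summing — which supplies the missing cancellation. I should also note that the proof uses only the weaker hypothesis that each series $\sum_n e_nx$ is weakly unconditionally Cauchy, consistent with the formulation in the abstract.
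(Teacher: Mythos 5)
Your proof is correct and is essentially the paper's own argument: both expand $a_n$ in the orthonormal basis as $\sum_k\langle a_n,e_k\rangle e_k$, apply Cauchy--Schwarz, and invoke Grothendieck's theorem (that the inclusion $\ell_1\to\ell_2$ is absolutely summing, Lemma~\ref{l:Grothendieck}) to bound $\sum_{n\in F}\big\|(f(e_kx_n))_k\big\|_{\ell_2}$ by $\max_{E\subseteq F}\big\|\sum_{n\in E}x_n\big\|$, exactly as in the proof of Theorem~\ref{t:U}. The remaining differences are organizational rather than mathematical: the paper obtains your uniform constant $C$ from Fernandez's multilinear closed graph theorem (Theorem~\ref{t:Fernandez}) applied to $E\colon X^*\times X\to\ell_1$ instead of Banach--Steinhaus, phrases the Grothendieck step as a concrete vector inequality rather than via $\pi_1(j)\le K_G$ and the ideal property of $1$-summing operators, and routes the tail estimate through the general characterization of Theorem~\ref{t:unconditional} rather than verifying the Cauchy criterion directly; your closing observation that weak absolute convergence of $\sum_{n}e_nx$ already suffices is precisely the paper's Theorem~\ref{t:U}.
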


Theorems \ref{t:main2} and \ref{t:main1} will be proved in Sections~\ref{s:main2} and \ref{s:main1}, respectively. In Section~\ref{s:U} we shall prove two characterizations of Banach actions that preserve unconditional convergence.  One of these characterizations (Theorem~\ref{t:dual}) reduces the problem of recognizing Banach actions preserving unconditional convergence to the problem of recognizing absolutely summing operators, which is well-studied in Functional Analysis, see \cite{DJT}, \cite{Woj}.

\begin{remark} It should be mentioned that problems similar to Problem~\ref{prob1} have been considered in the mathematical literature. In particular,  Boyko \cite{Boyko} considered a problem of recognizing subsets $G$ of the Banach space  $L(X,Y)$ of continuous linear operators from a Banach space $X$ to a Banach space $Y$ such that for any unconditionally convergent series $\sum_{i\in\w}x_i$ in $X$ and any sequence of operators $\{T_n\}_{n\in\w}\subseteq G$ the series $\sum_{n\in\w}T_n(x)$ converges (unconditionally or absolutely) in $Y$. 
\end{remark}

\section{Preliminaries}

Banach spaces considered in this paper are over the field $\IF$ of real or complex numbers. For a Banach space $X$ its norm is denoted by $\|\cdot\|_X$ or $\|\cdot\|$ (if $X$ is clear from the context). The dual Banach space to a Banach space $X$ is denoted by $X^*$.

 By $\w$ we denote the set of all non-negative integer numbers.  Each number $n\in\w$ is identified with the set $\{0,\dots,n-1\}$ of smaller numbers. Let $\IN=\w\setminus\{0\}$ be the set of positive integer numbers. For a set $A$ let $[A]^{<\w}$ denote the family of all finite subsets of $A$.

We start with two known elementary lemmas, giving their proofs just for the reader's  convenience.

\begin{lemma}\label{l:R} For any finite sequence of real numbers $(x_k)_{k\in n}$ we have 
$$\sum_{k\in n}|x_k|\le 2\max_{F\subseteq n}\big|\sum_{k\in F}x_k\big|.
$$
\end{lemma}

\begin{proof} Let $F_+=\{k\in n:x_k\ge 0\}$ and $F_-=\{k\in n:x_k<0\}$.
Then $$\sum_{k\in n}|x_k|=\big|\sum_{k\in F_+}x_k\big|+\big|\sum_{k\in F_-}x_k\big|\le 2\max_{F\subseteq n}\big|\sum_{k\in n}z_k\big|.$$
\end{proof}

\begin{lemma}\label{l:Z} For any finite sequence of complex numbers $(z_k)_{k\in n}$ we have 
$$\sum_{k\in n}|z_k|\le 4\max_{F\subseteq n}\big|\sum_{k\in F}z_k\big|.
$$
\end{lemma}

\begin{proof} For a complex number $z$, let $\Re(z)$ and $\Im(z)$ be its real and complex parts, respectively. Applying Lemma~\ref{l:R} we conclude that 
\begin{multline*}
\sum_{k\in n}|z_k|\le\sum_{k\in n}(|\Re(z_k)|+|\Im(z_k)|)=\sum_{k\in n}|\Re(z_k)|+\sum_{k\in n}|\Im(z_k)|\le\\
2\max_{F\subseteq n}\big|\sum_{k\in F}\Re(z_k)\big|+
2\max_{F\subseteq n}\big|\sum_{k\in F}\Im(z_k)\big|=\\
2\max_{F\subseteq n}\big|\Re\big(\sum_{k\in F}z_k\big)\big|+
2\max_{F\subseteq n}\big|\Im\big(\sum_{k\in F}z_k\big)\big|\le4\max_{F\subseteq n}\big|\sum_{k\in F}z_k\big|.
\end{multline*}
\end{proof}

\begin{remark} It is clear that the constant 2 in Lemma~\ref{l:R} is the best possible. On the other hand, the constant 4 in Lemma~\ref{l:Z} can be improved to the constant $\pi$, which is the best possible according to \cite{NV}.
\end{remark}

The following inequality between $\ell_p$ and $\ell_q$ norms is well-known and follows from the H\"older inequality.

\begin{lemma}\label{l:p-q} For any $1\le p\le q<\infty$ and any sequence $(z_k)_{k\in n}$ of complex numbers we have
$$\Big(\sum_{k\in n}|z_k|^q\Big)^{\frac1q}\le \Big(\sum_{k\in n}|z_k|^p\Big)^{\frac1p}\le
n^{\frac1p-\frac1q}\Big(\sum_{k\in n}|z_k|^q\Big)^{\frac1q}.$$
\end{lemma}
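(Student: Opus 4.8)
The plan is to prove the two inequalities separately, after disposing of the trivial case in which every $z_k$ vanishes (then both sides are $0$). So assume $S:=\sum_{k\in n}|z_k|^p>0$.

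For the left inequality $\big(\sum_{k\in n}|z_k|^q\big)^{1/q}\le\big(\sum_{k\in n}|z_k|^p\big)^{1/p}$, I would normalize. Put $a_k:=|z_k|^p/S$, so that each $a_k\in[0,1]$ and $\sum_{k\in n}a_k=1$. Since $q/p\ge 1$ and $0\le a_k\le 1$, we have $a_k^{q/p}\le a_k$ for every $k$, and hence $\sum_{k\in n}a_k^{q/p}\le\sum_{k\in n}a_k=1$. Unwinding the definition of $a_k$, this reads $\sum_{k\in n}|z_k|^q\le S^{q/p}=\big(\sum_{k\in n}|z_k|^p\big)^{q/p}$, and taking $q$-th roots gives the desired bound.

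For the right inequality, I would invoke the H\"older inequality directly, as already advertised in the text. Writing $|z_k|^p=|z_k|^p\cdot 1$ and applying H\"older with the conjugate exponents $q/p$ and $q/(q-p)$ (valid when $p<q$; the case $p=q$ makes the whole statement an equality and is handled at once), one obtains $\sum_{k\in n}|z_k|^p\le\big(\sum_{k\in n}|z_k|^q\big)^{p/q}\cdot n^{(q-p)/q}$. Taking $p$-th roots and simplifying the exponent via $\tfrac{q-p}{pq}=\tfrac1p-\tfrac1q$ then yields $\big(\sum_{k\in n}|z_k|^p\big)^{1/p}\le n^{1/p-1/q}\big(\sum_{k\in n}|z_k|^q\big)^{1/q}$.

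I do not expect any genuine obstacle here, as this is a standard norm-nesting estimate. The only points demanding a little care are correctly identifying the H\"older conjugate of $q/p$ as $q/(q-p)$ and separately treating the degenerate cases $p=q$ and $z\equiv 0$, where the power trick and H\"older respectively become vacuous.
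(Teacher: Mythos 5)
Your proof is correct. The paper does not actually write out a proof of this lemma (it only remarks that the estimate is well known and follows from the H\"older inequality), and your argument fills this in along exactly the advertised lines: H\"older with the conjugate exponents $q/p$ and $q/(q-p)$ for the right-hand bound, together with the standard normalization trick ($a_k\in[0,1]$, $\sum_k a_k=1$, hence $a_k^{q/p}\le a_k$) for the left-hand monotonicity of the $\ell_p$-norms, with the degenerate cases $p=q$ and $z\equiv 0$ treated correctly.
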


By Proposition 1.c.1 in \cite{LT}, a series $\sum_{k\in\w}x_k$ in a Banach space $X$ converges unconditionally to an element $x\in X$ if and only if for any $\e>0$ there exists a finite set $F\subseteq\w$ such that $\big\|x-\sum_{k\in E}x_k\big\|<\e$ for any finite set $E\subseteq \w$ containing $F$. By Proposition 1.c.1 \cite{LT}, a series $\sum_{k\in \w}x_k$ in a Banach space $X$ converges unconditionally to some element of $X$ if and only if it is {\em unconditionally Cauchy} in the sense that for every $\e>0$ there exists a finite set $F\subset\w$ such that $\sup_{E\in[\w\setminus F]^{<\w}}\big\|\sum_{k\in E}x_k\big\|<\e$. 

By the Bounded Multiplier Test \cite[1.6]{DJT}, a series $\sum_{k\in\w}x_k$ in a Banach space $X$ converges unconditionally if and only if for  every bounded sequence of scalars $(t_n)_{n\in\w}$ the series $\sum_{n=1}^{\infty}t_n x_{n}$ converges in $X$. This characterization suggests the possibility of replacing scalars $t_n$ by Banach action multipliers, which is the subject of our paper.

A series $\sum_{i\in \w}x_i$ in a Banach space $X$ is called {\em weakly absolutely convergent} if for every linear continuous functional $x^*$ on $X$ we have $\sum_{n\in\w}|x^*(x_n)|<\infty$. It is easy to see that each unconditionally convergent series in a Banach space is weakly absolutely convergent. By Bessaga--Pe\l cz\'nski Theorem \cite[6.4.3]{KK}, the converse is true if and only if the Banach space $X$ contains no subspaces isomorphic to $c_0$.

For a Banach space $X$, let $\Sigma[X]$ be the Banach space of all functions $x:\w\to X$ such that the series $\sum_{n\in\w}x(n)$ is unconditionally  Cauchy. The space $\Sigma[X]$ is endowed with the norm
$$\|x\|=\sup_{F\in[\w]^{<\w}}\big\|\sum_{n\in F}x(n)\big\|_X.$$
The space $\Sigma[X]$ is called {\em the Banach space of unconditionally convergent series} in the Banach space $X$.

 More information on unconditional convergence in Banach spaces can be found in the monographs \cite{Diestel}, \cite{DJT}, \cite{KK}, \cite{LT}, \cite{Woj}.

\begin{lemma}\label{l:1} Let $X,Y$ be Banach spaces and $(T_n)_{n\in\w}$ be a sequence of bounded operators from $X$ to $Y$ such that for every $x\in X$ the series $\sum_{n\in\w}T_n(x)$ converges unconditionally in $Y$. Then there exists a real constant $C$ such that 
$$\sup_{F\in[\w]^{<\w}}\big\|\sum_{n\in F}T_n(x)\big\|\le C\|x\|.$$
\end{lemma}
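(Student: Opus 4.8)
The plan is to recognize this as a uniform boundedness statement and derive it from the Banach--Steinhaus theorem applied to the family of partial-sum operators.

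First I would associate to each finite set $F\in[\w]^{<\w}$ the operator $S_F\colon X\to Y$ defined by $S_F(x)=\sum_{n\in F}T_n(x)$. As a finite linear combination of the bounded operators $T_n$, each $S_F$ is a bounded linear operator. The quantity to be controlled is $\sup_{F}\|S_F\|$, because by the definition of the operator norm, once we know $C:=\sup_{F\in[\w]^{<\w}}\|S_F\|<\infty$ we obtain, for every $F$ and every $x$, the inequality $\big\|\sum_{n\in F}T_n(x)\big\|=\|S_F(x)\|\le C\|x\|$, which is exactly the claim.

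Second I would verify the pointwise boundedness hypothesis of the Banach--Steinhaus theorem. Fix $x\in X$. Since the series $\sum_{n\in\w}T_n(x)$ converges unconditionally, it is unconditionally Cauchy, so applying the definition with $\e=1$ produces a finite set $F_0\subseteq\w$ with $\sup_{E\in[\w\setminus F_0]^{<\w}}\big\|\sum_{n\in E}T_n(x)\big\|\le 1$. For an arbitrary $F\in[\w]^{<\w}$, splitting $F=(F\cap F_0)\cup(F\setminus F_0)$ and applying the triangle inequality yields
$$\|S_F(x)\|\le\sum_{n\in F_0}\|T_n(x)\|+\Big\|\sum_{n\in F\setminus F_0}T_n(x)\Big\|\le\sum_{n\in F_0}\|T_n(x)\|+1,$$
a bound independent of $F$. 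Hence $\sup_{F}\|S_F(x)\|<\infty$ for every $x\in X$.

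Finally, the Uniform Boundedness Principle applied to the family $\{S_F\}_{F\in[\w]^{<\w}}$ of bounded operators on the Banach space $X$ furnishes a finite constant $C=\sup_{F}\|S_F\|$, which is the required constant. I expect no genuine obstacle here: the only step needing care is the passage from unconditional convergence to a uniform bound on the tails $\sum_{n\in E}T_n(x)$ over $E\subseteq\w\setminus F_0$, and this is precisely the unconditionally Cauchy characterization recorded in the Preliminaries; everything else is a routine invocation of Banach--Steinhaus.
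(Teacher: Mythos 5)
Your proof is correct, and it takes a genuinely different route from the paper. You apply the Uniform Boundedness Principle to the family of partial-sum operators $\{S_F\}_{F\in[\w]^{<\w}}$, obtaining pointwise boundedness from the unconditionally Cauchy property (split $F$ into $F\cap F_0$ and $F\setminus F_0$, with the tail controlled by the Cauchy condition); this argument needs only that $X$ is a Banach space, and it never mentions the auxiliary space $\Sigma[Y]$. The paper instead packages the entire sequence $(T_n)_{n\in\w}$ into a single operator $T:X\to\Sigma[Y]$, observes that its graph is an intersection of closed sets, and invokes the Closed Graph Theorem; this is shorter on the page but silently relies on the completeness of $\Sigma[Y]$ (needed for the Closed Graph Theorem to apply) and on the continuity of the coordinate evaluations $y\mapsto y(n)$ on $\Sigma[Y]$ to see that the graph is closed. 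The paper's choice is natural in context, since the same closed-graph pattern (via Fernandez's multilinear version, Theorem~\ref{t:Fernandez}) recurs in the proofs of Theorems~\ref{t:unconditional}, \ref{t:dual} and \ref{t:U}, whereas your argument is the more self-contained of the two: both rest on Baire category, but yours trades the construction and completeness of $\Sigma[Y]$ for an elementary tail estimate.
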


\begin{proof} The sequence $(T_n)_{n\in\w}$ determines a linear operator $T:X\to\Sigma[Y]$ whose graph
$$\bigcap_{n\in\w}\big\{(x,y)\in X\times\Sigma[Y]:y(n)=T_n(x)\big\}$$is closed in the Banach space $X\times\Sigma[Y]$. 
By the Closed Graph Theorem, the operator $T$ is bounded and hence
$$
\sup_{F\in[\w]^{<\w}}\big\|\sum_{n\in F}T_n(x)\big\|=\|T(x)\|_{\Sigma[Y]}\le\|T\|\cdot\|x\|.$$
\end{proof}

We shall often use the following Closed Graph Theorem for multilinear operators proved by Fernandez in \cite{Fernandez}.

\begin{theorem}\label{t:Fernandez} A multilinear operator $T:X_1\times \cdots \times X_n\to Y$ between Banach spaces is continuous if and only if it has closed graph if and only if it has bounded norm
$$\|T\|=\sup\{\|T(x_1,\dots,x_n)\|:\max\{\|x_1\|,\dots,\|x_n\|\}\le 1\}.$$
\end{theorem}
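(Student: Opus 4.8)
The plan is to prove the three stated conditions equivalent by isolating the two routine implications and concentrating on the substantive one. Boundedness of the norm trivially implies continuity, and continuity trivially implies closedness of the graph (the codomain $Y$ being Hausdorff); so the heart of the matter is the implication \emph{closed graph} $\Rightarrow$ \emph{bounded norm}, which is exactly where the linear theory must be invoked. I would therefore aim to close the cycle bounded $\Rightarrow$ continuous $\Rightarrow$ closed graph $\Rightarrow$ bounded.

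For \emph{bounded norm} $\Rightarrow$ \emph{continuity} I would use the multilinear telescoping identity
$$T(x_1,\dots,x_n)-T(y_1,\dots,y_n)=\sum_{k=1}^{n}T\big(y_1,\dots,y_{k-1},\,x_k-y_k,\,x_{k+1},\dots,x_n\big),$$
bound each summand by $\|T\|$ times the product of the relevant coordinate norms, and conclude that $T$ is Lipschitz on every bounded subset of $X_1\times\cdots\times X_n$, hence continuous. The implication \emph{continuity} $\Rightarrow$ \emph{closed graph} is immediate.

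For the main implication I would argue by induction on $n$, the base case $n=1$ being the classical Closed Graph Theorem for linear operators. For the inductive step, fix $x_1\in X_1$ and consider the $(n-1)$-linear map $T_{x_1}\colon(x_2,\dots,x_n)\mapsto T(x_1,x_2,\dots,x_n)$. If $T$ has closed graph then so does each $T_{x_1}$: a sequence convergent in the last $n-1$ coordinates, together with the fixed first coordinate, converges in $X_1\times\cdots\times X_n$, so closedness of the graph of $T$ transfers to $T_{x_1}$. By the inductive hypothesis each $T_{x_1}$ is bounded. Symmetrically, fixing the last $n-1$ coordinates yields a linear map $x_1\mapsto T(x_1,x_2,\dots,x_n)$ whose graph is closed, hence bounded by the linear Closed Graph Theorem.

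It remains to upgrade this \emph{separate} boundedness to a \emph{uniform} one, and this is the step I expect to be the real obstacle. I would apply the Uniform Boundedness Principle to the family of bounded linear operators $S_{x_2,\dots,x_n}\colon X_1\to Y$, $x_1\mapsto T(x_1,x_2,\dots,x_n)$, indexed by unit vectors $x_2,\dots,x_n$. For each fixed $x_1$ the quantity $\sup\{\|S_{x_2,\dots,x_n}(x_1)\|:\|x_2\|,\dots,\|x_n\|\le1\}$ equals $\|T_{x_1}\|$, which is finite; thus the family is pointwise bounded, and the Uniform Boundedness Principle supplies a constant $C$ with $\sup_{x_2,\dots,x_n}\|S_{x_2,\dots,x_n}\|\le C$. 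Unwinding the definitions gives $\|T(x_1,\dots,x_n)\|\le C\|x_1\|$ whenever $\|x_2\|,\dots,\|x_n\|\le1$, and multilinear homogeneity then yields $\|T(x_1,\dots,x_n)\|\le C\,\|x_1\|\cdots\|x_n\|$, that is $\|T\|\le C<\infty$. The only delicate point is to organise the induction so that the pointwise-boundedness hypothesis of the Uniform Boundedness Principle is genuinely available at each stage; once separate continuity has been secured, this is routine.
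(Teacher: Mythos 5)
Your proposal is correct, but there is nothing in the paper to compare it against: the paper does not prove this theorem at all — it quotes it as a known result of Fernandez \cite{Fernandez} and uses it as a black box. Your argument, the cycle bounded norm $\Rightarrow$ continuity (via the telescoping identity) $\Rightarrow$ closed graph $\Rightarrow$ bounded norm, with the last implication handled by induction on $n$, is the standard route and, in substance, the argument of the cited two-page note: the linear Closed Graph Theorem gives boundedness of the partial (separate) maps, and the Uniform Boundedness Principle upgrades separate boundedness to joint boundedness. The points you flagged as delicate do check out: the graph of $T_{x_1}$ (and likewise of each $S_{x_2,\dots,x_n}$) is closed whenever the graph of $T$ is, since appending the frozen coordinates to a convergent sequence keeps it convergent in the product; the inductive hypothesis applied to $T_{x_1}$ supplies exactly the pointwise bound $\sup\{\|S_{x_2,\dots,x_n}(x_1)\|:\|x_2\|\le 1,\dots,\|x_n\|\le1\}=\|T_{x_1}\|<\infty$ required by the UBP; and multilinear homogeneity converts the resulting uniform bound into $\|T\|\le C$. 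So your write-up is a complete, self-contained proof of a statement the paper leaves to the literature.
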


\section{Characterizing Banach actions that preserve unconditional convergence}\label{s:U}

In this section we present two characterizations of Banach actions that preserve unconditional convergence.

\begin{definition}\label{d:unconditional} A Banach {action} $A\times X\to Y$ is called {\em unconditional} if there exists a positive real number $C$ such that for every $n\in\IN$ and sequences $\{a_k\}_{k\in n}\subset A$ and $\{x_k\}_{k\in n}\subset X$ we have
$$\big\|\sum_{k\in n}a_kx_k\big\|_Y\le C\cdot \max_{k\in n}\|a_k\|_A\cdot\max_{F\subseteq n}\big\|\sum_{k\in F}x_k\big\|_X.$$
\end{definition}

\begin{theorem}\label{t:unconditional} A Banach {action} $A\times X\to Y$  preserves unconditional convergence if and only if it is unconditional.
\end{theorem}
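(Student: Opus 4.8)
The plan is to prove both implications of this equivalence.

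For the direction that "unconditional implies preserves unconditional convergence," I would argue as follows. Suppose the action satisfies the inequality in Definition~\ref{d:unconditional} with constant $C$. Let $(a_n)_{n\in\w}$ be a bounded sequence in $A$, say with $\sup_n\|a_n\|_A\le M$, and let $\sum_{n\in\w}x_n$ be an unconditionally convergent series in $X$. To show $\sum_{n\in\w}a_nx_n$ is unconditionally convergent, I would verify the unconditional Cauchy criterion recalled in the Preliminaries. Given $\e>0$, unconditional convergence of $\sum_n x_n$ gives a finite set $F\subset\w$ such that $\sup_{E\in[\w\setminus F]^{<\w}}\big\|\sum_{k\in E}x_k\big\|_X<\frac{\e}{CM}$. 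Then for any finite $E\subseteq\w\setminus F$, applying the defining inequality to the finite sequences $(a_k)_{k\in E}$ and $(x_k)_{k\in E}$ yields
$$\big\|\sum_{k\in E}a_kx_k\big\|_Y\le C\cdot\max_{k\in E}\|a_k\|_A\cdot\max_{E'\subseteq E}\big\|\sum_{k\in E'}x_k\big\|_X\le C\cdot M\cdot\frac{\e}{CM}=\e,$$
since every $E'\subseteq E$ is itself a finite subset of $\w\setminus F$. This shows $\sum_n a_nx_n$ is unconditionally Cauchy, hence unconditionally convergent.

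For the harder converse, I would deduce the uniform inequality from a uniform boundedness / closed graph argument. The natural idea is: fixing the multiplier sequence $(a_n)_{n\in\w}$, the assumption says the operator $x\mapsto(a_nx)_{n\in\w}$ takes unconditionally convergent series to unconditionally convergent series. Concretely, I would consider the bilinear map sending a bounded sequence $a=(a_n)\in\ell_\infty(A)$ and an unconditionally convergent series $x=(x_n)\in\Sigma[X]$ to the series $(a_nx_n)_{n\in\w}$, and argue that the hypothesis forces this to land in $\Sigma[Y]$. The key point is to show this bilinear map $\ell_\infty(A)\times\Sigma[X]\to\Sigma[Y]$ is everywhere defined; then by Fernandez's Closed Graph Theorem for multilinear operators (Theorem~\ref{t:Fernandez}) it is bounded, and its norm bound is exactly the constant $C$ in Definition~\ref{d:unconditional} once one unwinds the norms $\|a\|_{\ell_\infty(A)}=\sup_n\|a_n\|_A$ and $\|x\|_{\Sigma[X]}=\sup_{F}\|\sum_{k\in F}x_k\|_X$.

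The main obstacle is verifying that the bilinear map is \emph{well-defined into $\Sigma[Y]$}, i.e. that $(a_nx_n)_{n\in\w}\in\Sigma[Y]$ for every $a\in\ell_\infty(A)$ and $x\in\Sigma[X]$, since the hypothesis literally speaks only of bounded sequences and unconditionally \emph{convergent} (not merely unconditionally Cauchy) series. To bridge this, I would first reduce to the case where $X$ is replaced by its completion issues are absent and then note that an element of $\Sigma[X]$ need not converge in $X$, but the action $A\times X\to Y$ is continuous and the target obstruction disappears once we check unconditional Cauchyness of $(a_nx_n)$ directly rather than convergence. A clean route is to establish closedness of the graph in $\ell_\infty(A)\times\Sigma[X]\times\Sigma[Y]$ by a coordinatewise argument: if $a^{(j)}\to a$, $x^{(j)}\to x$ and the images converge to some $y\in\Sigma[Y]$, then continuity of the action gives $y(n)=a_nx_n$ for each $n$, so the limit is the correct element. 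Once the map is shown everywhere defined and closed-graph, Theorem~\ref{t:Fernandez} delivers the bound, completing the proof.
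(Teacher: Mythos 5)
Your proposal is correct and follows essentially the same route as the paper: the unconditional Cauchy criterion for the easy direction, and the bilinear map $\ell_\infty[A]\times\Sigma[X]\to\Sigma[Y]$ combined with Fernandez's closed graph theorem (Theorem~\ref{t:Fernandez}) for the converse, with the constant extracted by extending finite sequences by zero. One remark: the ``main obstacle'' you describe is vacuous, since $X$ and $Y$ are Banach spaces, so unconditionally Cauchy series converge unconditionally (as recalled in the Preliminaries), and hence $(a(n)x(n))_{n\in\w}\in\Sigma[Y]$ follows immediately from the hypothesis, with no need for completions or any reduction.
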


\begin{proof} To prove the ``if'' part, assume that the {action} $A\times X\to Y$ is unconditional and hence satisfies Definition~\ref{d:unconditional} for some constant $C$. To prove that the {action} preserves unconditional convergence, fix any unconditionally convergent series $\sum_{n\in\w}x_n$ in $X$, a bounded sequence $(a_n)_{n\in\w}$ in $A$ and  $\e>0$. Let $a=\sup_{n\in\w}\|a_n\|_A<\infty$. By the unconditional convergence of the series $\sum_{n\in\w}x_n$, there exists a finite set $F\subset\w$ such that $$\sup_{E\in[\w\setminus F]^{<\w}}\big\|\sum_{n\in E}x_n\big\|_X<\frac{\e}{C(1+a)}.$$ Then for any finite set $E\subseteq\w\setminus F$ we have$$\big\|\sum_{n\in E}a_nx_n\big\|_Y\le C\cdot\max_{n\in E}\|a_n\|_A\cdot\max_{K\subseteq E}\big\|\sum_{n\in K}x_n\big\|_X\le Ca\frac{\e}{C(1+a)}\le\e,$$
which means that the series $\sum_{n\in\w}a_nx_n$ is unconditionally Cauchy and hence unconditionally convergent in the Banach space $Y$.
\smallskip

To prove the ``only if'' part, assume that a Banach multiplication $A\times X\to Y$ preserves unconditional convergence. Let $\Sigma[X]$ and $\Sigma[Y]$ be the Banach spaces of unconditionally convergent series in the Banach spaces $X$ and $Y$, respectively. 
 Let $\ell_\infty[A]$ be the Banach space of all bounded functions $a:\w\to A$ endowed with the norm $\|a\|_{\ell_\infty[A]}=\sup_{n\in\w}\|a(n)\|_A$. For every $a\in\ell_\infty[A]$ and $x\in\Sigma[X]$, consider the function $ax:\w\to Y$ assigning to each $n\in\w$ the element $a(n)x(n)\in Y$, which is the image of the pair $(a(n),x(n))$ under the Banach action $A\times X\to Y$. Since the action $A\times X\to Y$ preserves unconditional convergence, the function $ax$ belongs to the Banach space $\Sigma[Y]$ of all unconditionally convergent series on $Y$. Therefore, the Banach action    
$$T:\ell_\infty[A]\times\Sigma(X)\to\Sigma[Y],\quad T:(a,x)\mapsto ax,$$ is well-defined.
This {action} has closed graph
$$\bigcap_{n\in\w}\{((a,x),y)\in(\ell_\infty[A]\times\Sigma[X])\times\Sigma[Y]:y(n)=a(n)x(n)\}$$and hence is continuous, by Theorem~\ref{t:Fernandez}. 

Now take any $n\in \w$ and sequences $(a_k)_{k\in n}\in A^n$ and $(x_k)_{k\in n}\in X^n$. Consider the function $a:\w\to A$ defined by $a(k)=a_k$ for $k\in n$ and $a(k)=0$ for $k\in\w\setminus n$. Also let $x:\w\to X$ be the function such that $x(k)=x_k$ for $k\in n$ and $x(k)=0$ for $k\in\w\setminus n$. Since $a\in\ell_\infty[A]$ and $x\in\Sigma[X]$, we have
$$\big\|\sum_{k\in n}a_kx_k\big\|_Y \le \|ax\|_{\Sigma[Y]} \le \|T\|\cdot\|a\|_{\ell_\infty[A]}\cdot \|x\|_{\Sigma[X]}=\|T\|\cdot\max_{k\in n}\|a_k\|\cdot\max_{F\subseteq n}\big\|\sum_{k\in F}x_k\big\|_X,$$
which means that the Banach {action} $A\times X\to Y$ is unconditional.
\end{proof}

An essential ingredient of the proof of Theorems~\ref{t:main2} and \ref{t:main1} is the following characterization of unconditional Banach actions in terms of absolutely summing operators.  An operator $T:X\to Y$ between Banach spaces $X,Y$ is {\em absolutely summing} if for every unconditionally convergent series $\sum_{n\in\w}x_n$ in $X$ the series $\sum_{n\in\w}T(x_n)$ is absolutely convergent, i.e., $\sum_{n\in\w}\|T(x_n)\|<\infty$. For more information on absolutely summing operators, see  \cite{DJT} and \cite[Section III.F]{Woj}. 

Let $A,X,Y$ be Banach spaces over the field $\IF$ of real or complex numbers. Given a Banach action $A\times X\to Y$, consider the trilinear operator $$Y^*\times A\times X\to\IF,\quad (y^*,a,x)\mapsto y^*(ax),$$which induces the bilinear operator
$$D:Y^*\times X\to A^*,\quad D:(y^*,x)\mapsto D_{y^*\!,x},\quad\mbox{where}\quad D_{y^*\!,x}:a\mapsto y^*(ax).$$

For a Banach space $Y$, a subspace $E\subseteq Y^*$ is called {\em norming} if there exists a real constant $c$ such that $$\|y\|\le c\sup_{y^*\in S_E}|y^*(y)|,$$
where $S_E=\{e\in E:\|e\|=1\}$ is the unit sphere of the space $E$.

\begin{theorem}\label{t:dual} Let $Y$ be a Banach space and $E$ be a norming closed linear subspace in $Y^*$. A Banach action $A\times X\to Y$ is unconditional if and only if for every $y^*\in E$, the operator $D_{y^*}:X\to A^*$, $D_{y^*}:x\mapsto D_{y^*,x}$, is absolutely summing.
\end{theorem}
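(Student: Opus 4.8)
The plan is to deduce both implications from a single bilinear identity together with a dictionary between two ways of measuring a finite family of vectors. For $y^*\in Y^*$, vectors $a_1,\dots,a_n\in A$ and $x_1,\dots,x_n\in X$ one has
$$y^*\Big(\sum_{k\in n}a_kx_k\Big)=\sum_{k\in n}y^*(a_kx_k)=\sum_{k\in n}D_{y^*}(x_k)(a_k),$$
so, taking the supremum over $\|y^*\|_{Y^*}\le1$, the quantity $\big\|\sum_{k\in n}a_kx_k\big\|_Y$ is controlled by the numbers $\|D_{y^*}(x_k)\|_{A^*}$. The second ingredient is that the weak-$\ell_1$ quantity $\sup_{\|x^*\|\le1}\sum_{k\in n}|x^*(x_k)|$ occurring in the definition of an absolutely summing operator is equivalent to the quantity $\max_{F\subseteq n}\big\|\sum_{k\in F}x_k\big\|_X$ occurring in Definition~\ref{d:unconditional}: the estimate $\max_{F\subseteq n}\big\|\sum_{k\in F}x_k\big\|_X\le\sup_{\|x^*\|\le1}\sum_{k\in n}|x^*(x_k)|$ is immediate from Hahn--Banach, while applying Lemma~\ref{l:R} (real case) or Lemma~\ref{l:Z} (complex case) to the scalars $x^*(x_k)$ gives the reverse estimate with the factor $2$ (resp. $4$).

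For the ``only if'' part I would fix $y^*\in Y^*$ and an unconditionally convergent series $\sum_{n\in\w}x_n$ in $X$, and bound the partial sums $\sum_{k\in n}\|D_{y^*}(x_k)\|_{A^*}$. For each $k$ I choose $a_k\in A$ with $\|a_k\|_A\le1$ almost realizing $\|D_{y^*}(x_k)\|_{A^*}=\sup_{\|a\|_A\le1}|y^*(ax_k)|$, together with a unimodular scalar $\lambda_k$ making $\lambda_ky^*(a_kx_k)$ a nonnegative real number. Writing $b_k=\lambda_ka_k$ (still of norm $\le1$), one has $\sum_k|y^*(a_kx_k)|=y^*\big(\sum_kb_kx_k\big)\le\|y^*\|\cdot\big\|\sum_kb_kx_k\big\|_Y$, so unconditionality and letting the approximation error tend to $0$ yield
$$\sum_{k\in n}\|D_{y^*}(x_k)\|_{A^*}\le C\,\|y^*\|\cdot\max_{F\subseteq n}\big\|\sum_{k\in F}x_k\big\|_X\le C\,\|y^*\|\cdot\|x\|_{\Sigma[X]}.$$
As the right-hand side is independent of $n$, the nonnegative series $\sum_{n\in\w}\|D_{y^*}(x_n)\|_{A^*}$ converges, i.e. $D_{y^*}$ is absolutely summing. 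This direction is routine once the identity is in place.

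The substance of the theorem lies in the ``if'' part, and the one real obstacle is \emph{uniformity}: the hypothesis furnishes, for each individual $y^*$, a finite $1$-summing norm $\pi_1(D_{y^*})$, whereas unconditionality demands a single constant serving all $y^*$ at once. Combining the identity of the first paragraph with $|D_{y^*}(x_k)(a_k)|\le\|D_{y^*}(x_k)\|_{A^*}\|a_k\|_A$ gives
$$\big\|\sum_{k\in n}a_kx_k\big\|_Y=\sup_{\|y^*\|\le1}\Big|\sum_{k\in n}D_{y^*}(x_k)(a_k)\Big|\le\max_{k\in n}\|a_k\|_A\cdot\sup_{\|y^*\|\le1}\sum_{k\in n}\|D_{y^*}(x_k)\|_{A^*},$$
and for each $y^*$ the inner sum is at most $\pi_1(D_{y^*})\cdot\sup_{\|x^*\|\le1}\sum_{k\in n}|x^*(x_k)|$. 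Hence, once I know that $C':=\sup_{\|y^*\|\le1}\pi_1(D_{y^*})<\infty$, the dictionary of the first paragraph converts the weak-$\ell_1$ factor into $\max_{F\subseteq n}\big\|\sum_{k\in F}x_k\big\|_X$ at the cost of the constant $2$ (or $4$), giving unconditionality with constant $2C'$ (resp. $4C'$). Everything thus reduces to the uniform bound $C'<\infty$.

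To obtain this uniform bound I would apply the Closed Graph Theorem to the linear map $\Phi\colon Y^*\to\Pi_1(X,A^*)$, $\Phi\colon y^*\mapsto D_{y^*}$, where $\Pi_1(X,A^*)$ is the Banach space of absolutely summing operators endowed with the summing norm $\pi_1$ (into which $\Phi$ maps, by hypothesis). To check that $\Phi$ has closed graph, suppose $y^*_m\to y^*$ in $Y^*$ and $D_{y^*_m}\to S$ in $\pi_1$-norm. Since $\|\cdot\|_{\mathrm{op}}\le\pi_1(\cdot)$, we get $D_{y^*_m}(x)\to S(x)$ in $A^*$ for every $x$; on the other hand boundedness of the bilinear map $D\colon Y^*\times X\to A^*$ (which follows from continuity of the action, as $\|D_{y^*\!,x}\|_{A^*}\le\|y^*\|\cdot\|{\cdot}\|\cdot\|x\|$) gives $D_{y^*_m}(x)\to D_{y^*}(x)$. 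Hence $S=D_{y^*}$, the graph is closed, $\Phi$ is bounded, and $C'=\sup_{\|y^*\|\le1}\pi_1(D_{y^*})=\|\Phi\|<\infty$, completing the reduction of the previous paragraph. The only points needing care are the standard facts, available from \cite{DJT}, that $\Pi_1(X,A^*)$ is complete and that $\pi_1$ dominates the operator norm.
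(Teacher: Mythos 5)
Your proof is correct, and it shares the paper's overall architecture --- the duality identity $y^*\big(\sum_k a_kx_k\big)=\sum_k D_{y^*}(x_k)(a_k)$ plus a closed-graph argument to make the constant uniform in $y^*$ --- but both halves are implemented genuinely differently. For the ``only if'' part, the paper bounds $\sum_k\|D_{y^*\!,x_k}\|$ by $\sup_{a\in S^n}\sum_k|y^*(a_kx_k)|$ and then applies Lemma~\ref{l:Z}, ending with the constant $4C$; your trick of absorbing unimodular phases into the acting vectors ($b_k=\lambda_k a_k$) turns the sum of moduli into the single value $y^*\big(\sum_k b_kx_k\big)$, avoiding Lemma~\ref{l:Z} entirely and giving the sharper constant $C$. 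For the ``if'' part, the paper never introduces the summing norm: it forms the bilinear map $D_{\star\star}:Y^*\times\Sigma[X]\to\ell_1[A^*]$, $(y^*,(x_n)_{n\in\w})\mapsto (D_{y^*\!,x_n})_{n\in\w}$, whose well-definedness is precisely the hypothesis, and obtains its boundedness from Fernandez's multilinear Closed Graph Theorem~\ref{t:Fernandez}. You instead apply the classical linear Closed Graph Theorem to $\Phi:y^*\mapsto D_{y^*}\in\Pi_1(X,A^*)$, which requires importing from \cite{DJT} two standard but nontrivial facts: that the series definition of ``absolutely summing'' used in this paper is equivalent to finiteness of the summing norm $\pi_1$, and that $(\Pi_1(X,A^*),\pi_1)$ is complete with $\pi_1$ dominating the operator norm. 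What each approach buys: the paper's version is self-contained apart from Theorem~\ref{t:Fernandez} and needs no theory of summing norms, since the target space $\ell_1[A^*]$ encodes absolute summability directly; yours avoids the multilinear closed graph theorem, makes the uniform constant $\sup_{\|y^*\|\le1}\pi_1(D_{y^*})$ conceptually explicit, and ties the statement to the standard Banach-ideal framework of $\Pi_1$, at the cost of re-introducing the factor $4$ when translating the weak-$\ell_1$ quantity $\sup_{\|x^*\|\le1}\sum_k|x^*(x_k)|$ back into $\max_{F\subseteq n}\big\|\sum_{k\in F}x_k\big\|_X$.
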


\begin{proof} Assuming that the action $A\times X\to Y$ is unconditional, find a real constant $C$ satisfying the inequality in Definition~\ref{d:unconditional}. 

Fix any $y^*\in E$, $n\in\IN$ and a sequence $(x_k)_{k\in n}$ of elements of the Banach space $X$. In the following formula by $S$ we shall denote the unit sphere of the Banach space $A$. For a sequence $a\in S^n$ and $k\in n$ by $a_k$ we denote the $k$-th coordinate of $a$. Applying Lemma~\ref{l:Z} and the inequality from Definition~\ref{d:unconditional}, we obtain that 
$$
\begin{aligned}
\sum_{k\in n}\|D_{y^*\!,x_k}\|&=\sum_{k\in n}\sup_{a\in S}|y^*(ax_k)|\le \sup_{a\in S^n}\sum_{k\in n}|y^*(a_kx_k)|\le\\
&\le4\sup_{a\in S^n}\max_{F\subseteq n}\big|\sum_{k\in F}y^*(a_kx_k)\big|\le
4\sup_{a\in S^n} \max_{F\subseteq n}\|y^*\|\cdot \big\|\sum_{k\in F}a_kx_k\big\|\le\\
&\le 4\sup_{a\in S^n} \max_{F\subseteq n}\|y^*\|\cdot C\max_{k\in F}\|a_k\|\cdot\max_{E\subseteq F}\big\|\sum_{k\in E}x_k\big\| \le 4C\|y^*\|\max_{E\subseteq n}\big\|\sum_{k\in E}x_k\big\|.
\end{aligned}
$$
This inequality implies that for every $y^*\in E$ and every unconditionally convergent series $\sum_{n\in\w}x_n$ in $X$ we have $\sum_{n\in \w}\|D_{y^*\!,x_n}\|<\infty$, which means that the operator $D_{y^*}:X\to A^*$, $D_{y^*}:x\mapsto D_{y^*,x}$, is absolutely summing.
\smallskip

Now assume conversely that for every  $y^*\in E$ the operator $D_{y^*}:X\to A^*$ is absolutely summing. Since the space $E$ is norming, there is a real constant $c$ such that $$\|y\|\le c\cdot\sup_{y^*\in S_E}|y^*(y)|$$ for every $y\in Y$. Let $\Sigma[X]$ be the Banach space of unconditionally convergent series in $X$ and $$\ell_1[A^*]=\{(a_n^*)_{n\in\w}\in (A^*)^\w:\sum_{n\in\w}\|a^*_n\|<\infty\}$$ be the Banach space of all absolutely summing sequences in $A^*$. The Banach space $\ell_1[A^*]$ is endowed with the norm $\|(a_n^*)_{n\in\w}\|=\sum_{n\in\w}\|a^*_n\|$. Our assumption ensures that the bilinear operator $$D_{\star\star}:E\times \Sigma[X]\to\ell_1[A^*],\quad D_{\star\star}:(y^*,(x_n)_{n\in\w})\mapsto(D_{y^*,x_n})_{n\in\w},$$is well-defined. It is easy to see that this operator has closed graph and hence it is continuous. 

Then for every $n\in\IN$ and sequences $\{a_k\}_{k\in n}\subset A$ and $\{x_k\}_{k\in n}\subset X$ we have
$$
\begin{aligned}
\big\|\sum_{k\in n}a_kx_k\big\|&
\le c\sup_{y^*\in S_E} \big|\sum_{k\in n}y^*(a_kx_k)\big|\le c\sup_{y^*\in S_E}\sum_{k\in n}|y^*(a_kx_k)|=c \sup_{y^*\in S_E}\sum_{k\in n}|D_{y^*,x_k}(a_k)| \le \\
&\le c\sup_{y^*\in S_E} \sum_{k\in n}\|D_{y^*,x_k}\|\cdot\|a_k\|\le c \sup_{y^*\in S_E}\max_{j\in n}\|a_j\|\sum_{k\in n}\|D_{y^*,x_k}\|\le\\
&\le c\sup_{y^*\in S_E} \max_{j\in n}\|a_j\|\cdot\|D_{\star\star}\|\cdot\|y^*\|\cdot\max_{F\subseteq n}\big\|\sum_{k\in F}x_k\big\|\le\\
&\le c \|D_{\star\star}\|\cdot\max_{j\in n}\|a_j\|\cdot\max_{F\subseteq n}\big\|\sum_{k\in F}x_k\big\|,
\end{aligned}
$$
which means that the Banach action $A\times X\to Y$ is unconditional.
\end{proof}

For any $p,q,r\in[1,\infty]$ with $\frac1r\le \frac1p+\frac1q$ and every $a \in \ell_p$ let $d_a:\ell_q\to\ell_r$, $d_a: x\mapsto a x$, be the (diagonal) operator of coordinatewise multiplication by $a$.

For a number $p\in[1,\infty]$, let $p^*$ be the unique number in $[1,\infty]$ such that $\frac1p+\frac1{p^{*}}=1$.  It is well-known that for any $p\in[1,\infty)$ the dual Banach space $\ell_p^*$ can be identified with $\ell_{p^*}$ and for $p = \infty$ a weaker condition holds true: $\ell_1$  is not equal to  $\ell_\infty^*$ but can be viewed as a norming subspace of $\ell_\infty^*$ (with norming constant $c=1$). 

Theorems~\ref{t:unconditional} and \ref{t:dual} imply the following characterization, that will be essentially used in the proof of Theorem~\ref{t:main2}.

\begin{corollary}\label{c:Kadets} For  numbers $p,q,r\in[1,\infty]$ with $\frac1r\le\frac1p+\frac1q$ the following conditions are equivalent:
\begin{enumerate}
\item the coordinatewise multiplication $\ell_p\times\ell_q\to\ell_r$ preserves unconditional convergence;
\item for every $a \in\ell_{r^{*}}$ the operator of coordinatewise multiplication $d_a:\ell_q\to\ell_{p^{*}}$, $d_a:x\mapsto a x$, is absolutely summing.
\end{enumerate}
\end{corollary}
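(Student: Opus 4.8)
The plan is to read the statement off directly from the two structural theorems of this section, the only real work being to unwind what the abstract objects of Theorem~\ref{t:dual} become in the concrete setting of $\ell_p,\ell_q,\ell_r$. First I would invoke Theorem~\ref{t:unconditional}, which tells us that condition (1) — that the action $\ell_p\times\ell_q\to\ell_r$ preserves unconditional convergence — is equivalent to this action being \emph{unconditional} in the sense of Definition~\ref{d:unconditional}. Thus it suffices to show that the action $\ell_p\times\ell_q\to\ell_r$ is unconditional if and only if condition (2) holds.

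Next I would apply Theorem~\ref{t:dual} with $A=\ell_p$, $X=\ell_q$, $Y=\ell_r$: the action is unconditional if and only if for every $y^*\in\ell_r^*$ the operator $D_{y^*}\colon\ell_q\to\ell_p^*$ is absolutely summing. Here I would use the standard identifications $\ell_r^*=\ell_{r^*}$ and $\ell_p^*=\ell_{p^*}$, valid because $p,r\in[1,\infty)$. The heart of the matter is then a concrete computation of $D_{y^*}$: for $z\in\ell_{r^*}$ (playing the role of $y^*$) and $x\in\ell_q$, the functional $D_{z,x}\colon a\mapsto z(ax)$ on $\ell_p$ acts by
$$D_{z,x}(a)=\sum_{n\in\w}z(n)\,(ax)(n)=\sum_{n\in\w}\big(z(n)x(n)\big)\,a(n),$$
so that, under the identification $\ell_p^*=\ell_{p^*}$, the functional $D_{z,x}$ coincides with the sequence $zx\in\ell_{p^*}$. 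Consequently $D_z=M_z\colon\ell_q\to\ell_{p^*}$, $x\mapsto zx$, and the condition ``$D_{y^*}$ is absolutely summing for every $y^*$'' turns into exactly condition (2).

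To make this rigorous I would also check that $M_z\colon\ell_q\to\ell_{p^*}$ is a well-defined bounded operator. This follows from the H\"older inequality once I verify the exponent relation $\frac1{p^*}\le\frac1{r^*}+\frac1q$; rewriting $\frac1{p^*}=1-\frac1p$ and $\frac1{r^*}=1-\frac1r$, this inequality is seen to be equivalent to the standing hypothesis $\frac1r\le\frac1p+\frac1q$. Chaining the three equivalences — (1) $\Leftrightarrow$ the action is unconditional (Theorem~\ref{t:unconditional}); the action is unconditional $\Leftrightarrow$ every $D_{y^*}$ is absolutely summing (Theorem~\ref{t:dual}); and $D_z=M_z$ — then yields (1) $\Leftrightarrow$ (2).

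I expect the only genuine point of care to be the bookkeeping of dual spaces and the verification that the abstract operator $D_{y^*}$ of Theorem~\ref{t:dual} is literally the multiplication operator $M_z$; everything else is a routine application of the already-established machinery. The exponent computation is precisely where the hypothesis $\frac1r\le\frac1p+\frac1q$ is consumed, ensuring that the correct target space is $\ell_{p^*}$.
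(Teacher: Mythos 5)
Your proposal is correct and follows exactly the route the paper intends: the paper derives this corollary directly by combining Theorem~\ref{t:unconditional} with Theorem~\ref{t:dual} (applied to $A=\ell_p$, $X=\ell_q$, $Y=\ell_r$, using the dualities $\ell_r^*=\ell_{r^*}$ and $\ell_p^*=\ell_{p^*}$), leaving the identification $D_{y^*}=M_z$ and the H\"older exponent check implicit. Your write-up simply makes those implicit verifications explicit, and both are done correctly, including the observation that $\frac1{p^*}\le\frac1{r^*}+\frac1q$ is equivalent to the standing hypothesis $\frac1r\le\frac1p+\frac1q$.
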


Corollary~\ref{c:Kadets} motivates the problem of recognizing absolute summing operators among diagonal operators $d_a:\ell_{p^*}\to\ell_q$. This problem has been considered and resolved by Garling who proved the following characterization in \cite[Theorem 9]{Garling}. In this characterization, $\ell_{p^-}$ denotes the linear subspace of $\ell_p$ consisting of all sequences $x\in\ell_p$ such that
$$\sum_{n\in\w}\frac{|x(n)|^p}{1+\ln |a_n^{-1}|}<\infty.$$

\begin{theorem}[Garling]\label{t:Garling} For numbers $r,p,q\in[1,\infty]$ with $\frac1r+\frac1{p^*}\ge\frac1q$ and a sequence $a\in\ell_r$, the operator $d_a:\ell_{p^*}\to\ell_q$ is absolutely summing if and only if the following conditions are satisfied:
\begin{itemize}
\item[(i)] if $1\le p\le 2$ and $p<q$, then $a\in\ell_p$;
\item[(ii)] if $1<p=q<2$, then $a\in \ell_{p^-}$;
\item[(iii)] if $p=q\in\{1,2\}$, then $a\in \ell_p$;
\item[(iv)] if $1\le q<p\le 2$, then $a\in\ell_q$;
\item[(v)] if $1\le q\le 2<p\le\infty$, then $a\in \ell_s$ for $s=(\frac1p+\frac1q-\frac12)^{-1}$;
\item[(vi)] if $2<q\le p<\infty$, then $a\in\ell_p$;
\item[(vii)] if $2<p<q\le\infty$, then $a\in\ell_p$;
\item[(viii)] if $2\le q\le p=\infty$, then $a\in\ell_\infty$.
\end{itemize}
\end{theorem} 

\section{Proof of Theorem~\ref{t:main2}}\label{s:main2}

By Corollary~\ref{c:Kadets} and Theorem~\ref{t:Garling}, for any numbers $p,q,r\in[1,\infty]$, the coordinatewise multiplication $\ell_p\times \ell_q\to\ell_r$ preserves unconditional convergence if and only if for every $a\in\ell_{r^*}$ the diagonal operator $d_a:\ell_q \to \ell_{p^*}$ is absolutely summing if and only if the following conditions are satisfied:
\begin{itemize}
\item[(a)] if $q^*\le 2$ and $q^*<p^*$, then $\ell_{r^*}\subseteq \ell_{q^*}$;
\item[(b)] if $1<q^*=p^*<2$, then $\ell_{r^*}\subseteq \ell_{q^{*-}}$;
\item[(c)] if $q^*=p^*\in\{1,2\}$, then $\ell_{r^*}\subseteq \ell_{q^*}$;
\item[(d)] if $p^*<q^*\le 2$, then $\ell_{r^*}\subseteq \ell_{p^*}$;
\item[(e)] if $p^*\le 2<q^*$, then $\ell_{r^*}\subseteq \ell_s$ for $s=(\frac1{q^*}+\frac1{p^*}-\frac12)^{-1}$;
\item[(f)] if $2<p^*\le q^*<\infty$, then $\ell_{r^*}\subseteq\ell_{q^*}$;
\item[(g)] if $2<q^*<p^*$, then $\ell_{r^*}\subseteq\ell_{q^*}$;
\item[(h)] if $2\le p^*\le q^*=\infty$, then $\ell_{r^*}\subseteq\ell_\infty$.
\end{itemize}
Now observe that the conditions (a)--(h) are equivalent to the following conditions (a$'$)--(h$'$), respectively:
\begin{itemize}
\item[(a$'$)] if $2\le q$ and $p<q$, then $r\ge q$;
\item[(b$'$)] if $2<p=q<\infty$, then $r>q$;
\item[(c$'$)] if $p=q\in\{2,\infty\}$, then $r\ge q$;
\item[(d$'$)] if $2\le q<p$, then $r\ge p$;
\item[(e$'$)] if $q<2\le p$, then $r^*\le(\frac1{q^*}+\frac1{p^*}-\frac12)^{-1}$;
\item[(f$'$)] if $1<q\le p<2$, then $r\ge q$;
\item[(g$'$)] if $p<q<2$, then $r\ge q$;
\item[(h$'$)] if $1=q\le p\le 2$, then $r\ge q$.
\end{itemize}
The conditions (a$'$), (c$'$), (e$'$), (f$'$), (g$'$), (h$'$) imply the condition
\begin{itemize}
\item[(i$'$)] if $p\le 2$, then $r\ge q$,
\end{itemize}
and the conditions (a$'$)--(e$'$) imply the conditions 
\begin{itemize}
\item[(ii$'$)] if $2<p<q$, then $r\ge q$;
\item[(iii$'$)] if $2<p=q<\infty$, then $r>q$;
\item[(iv$'$)] if $p=q=\infty$, then $r=\infty$;
\item[(v$'$)] if $2\le q<p$, then $r\ge p$;
\item[(vi$'$)] if $q<2<p$, then $\frac1{q^*}+\frac1{p^*}-\frac12\le \frac1{r^*}$, which is equivalent to $\frac12+\frac1r\le \frac1p+\frac1q$.
\end{itemize}
On the other hand, the conditions (i$'$)--(vi$'$) imply the conditions (a$'$)--(h$'$).

It is easy to see that the conjunction of the conditions  (i$'$)--(vi$'$) is equivalent to the disjunction of the conditions (i)--(vi) in Theorem~\ref{t:main2}, which completes the proof of Theorem~\ref{t:main2}. 

\section{Proof of Theorem~\ref{t:main1}}\label{s:main1}

Theorem~\ref{t:main1} follows immediately from Theorem~\ref{t:unconditional} and the next theorem, which is the main result of this section.

\begin{theorem}\label{t:U}  A Banach action $A\times X\to Y$ is unconditional if $A$ is a Hilbert space possessing an orthonormal basis $(e_n)_{n\in\w}$ such that for every $x\in X$ the series $\sum_{n\in\w}e_nx$ is weakly absolutely convergent.
\end{theorem}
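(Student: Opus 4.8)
The plan is to verify the inequality of Definition~\ref{d:unconditional} directly, using duality in $Y$, the Cauchy--Schwarz inequality in the Hilbert space $A$, and Grothendieck's inequality (Lemma~\ref{l:Grothendieck}) to glue the two together. The first step is to upgrade the pointwise hypothesis to a uniform bound. The hypothesis says that for each fixed $x\in X$ and each $y^*\in Y^*$ the sum $\sum_{m\in\w}|y^*(e_mx)|$ is finite, so the bilinear operator $B:Y^*\times X\to\ell_1$, $B(y^*,x)=\big(y^*(e_mx)\big)_{m\in\w}$, is well defined. I would check that $B$ has closed graph: if $(y^*_j,x_j)\to(y^*,x)$ and $B(y^*_j,x_j)\to c$ in $\ell_1$, then coordinatewise $y^*_j(e_mx_j)\to y^*(e_mx)$ by the joint continuity of the action and of the evaluation pairing, which forces $c=B(y^*,x)$. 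By the Closed Graph Theorem for bilinear operators (Theorem~\ref{t:Fernandez}), $B$ is bounded, i.e. there is a constant $C'$ with
$$\sum_{m\in\w}|y^*(e_mx)|\le C'\,\|y^*\|\,\|x\|_X\qquad(y^*\in Y^*,\ x\in X).$$

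Next I would run the main estimate. Fix $n\in\IN$, sequences $(a_k)_{k\in n}\subset A$, $(x_k)_{k\in n}\subset X$, and $y^*\in Y^*$ with $\|y^*\|\le1$. Expanding $a_k=\sum_m\langle a_k,e_m\rangle e_m$ and using continuity of the action and of $y^*$ gives $y^*(a_kx_k)=\sum_m\langle a_k,e_m\rangle\,y^*(e_mx_k)$. Writing $c_k:=\big(y^*(e_mx_k)\big)_{m\in\w}\in\ell_1\subseteq\ell_2$, the Cauchy--Schwarz inequality in $A$ yields $|y^*(a_kx_k)|\le\|a_k\|_A\,\|c_k\|_{\ell_2}$, hence $\sum_{k\in n}|y^*(a_kx_k)|\le\max_{k\in n}\|a_k\|_A\sum_{k\in n}\|c_k\|_{\ell_2}$. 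Now Grothendieck's inequality (applied to the real and imaginary parts of the $c_k$ when $\IF=\mathbb C$, via $|\Re z|,|\Im z|\le|z|$) bounds $\sum_{k\in n}\|c_k\|_{\ell_2}$ by a constant multiple of $\max_{F\subseteq n}\big\|\sum_{k\in F}c_k\big\|_{\ell_1}$. The crucial observation is that $\sum_{k\in F}c_k=\big(y^*(e_m\sum_{k\in F}x_k)\big)_m=B\big(y^*,\sum_{k\in F}x_k\big)$, so the uniform bound from the first step gives $\big\|\sum_{k\in F}c_k\big\|_{\ell_1}\le C'\,\|y^*\|\,\big\|\sum_{k\in F}x_k\big\|_X\le C'\max_{F\subseteq n}\big\|\sum_{k\in F}x_k\big\|_X$.

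Combining these inequalities and taking the supremum over $y^*$ in the unit ball of $Y^*$ (recall $\big\|\sum_k a_kx_k\big\|_Y=\sup_{\|y^*\|\le1}\big|\sum_k y^*(a_kx_k)\big|$) delivers the bound of Definition~\ref{d:unconditional} with a constant $C$ proportional to $K_GC'$, so the action is unconditional. I expect the main conceptual step to be the application of Grothendieck's inequality: Cauchy--Schwarz in $A$ naturally produces an $\ell_2$-sum of the vectors $c_k$, while the quantity we must control is an $\ell_1$-type maximum over subsets, and it is precisely Grothendieck's inequality that bridges this $\ell_2$-to-$\ell_1$ gap. The only routine technicalities are the closed-graph verification in the first step and the bookkeeping of constants in the complex-scalar case (where Lemma~\ref{l:Z} or the splitting into real and imaginary parts costs an extra factor).
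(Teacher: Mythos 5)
Your proposal is correct and follows essentially the same route as the paper's proof: both define the bilinear operator $(y^*,x)\mapsto\big(y^*(e_mx)\big)_{m\in\w}$ into $\ell_1$, bound it via the closed graph theorem for bilinear maps (Theorem~\ref{t:Fernandez}), expand $a_k$ in the orthonormal basis, apply Cauchy--Schwarz and then Grothendieck's inequality, and finish by duality in $Y$. The only differences are cosmetic: you argue directly by taking the supremum over the unit ball of $Y^*$ where the paper argues by contradiction via a Hahn--Banach norming functional, and you are slightly more explicit about the complex-scalar bookkeeping.
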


\begin{proof} Assume that $A$ is a Hilbert space and $(e_n)_{n\in\w}$ is an orthonormal basis in $A$ such that for every $x\in X$ the series $\sum_{n\in\w}e_nx$ is weakly absolutely convergent  $Y$. For any $y^* \in Y^*$, consider the following two  operators:
$$
\begin{aligned}
&T_1: X \to \ell_1, \quad T_1:x\mapsto (y^*(e_nx))_{n\in\w},\quad\mbox{and}\\
&T_2: \ell_1 \to A,\quad T_2:(s_n)_{n\in\w}\mapsto \sum_{n\in\w} s_n e_n.
\end{aligned}
$$
Both of them are bounded linear operators (for the boundedness of $T_1$ see, for example \cite[Lemma 6.4.1]{KK}). A fundamental theorem of Grothendieck from his famous paper \cite{Grot} (see, for example, \cite[Theorem 4.3.2]{KK} for the standard proof, and \cite[Section III.F]{Woj} for a different approach) says that every bounded linear operator from $\ell_1$ to a Hilbert space is absolutely summing, so in particular $T_2$  is absolutely summing. Then the composition $T_2T_1$ is absolutely summing as well. Let us demonstrate that  $T_2T_1$ is equal to the operator $D_{y^*}:X\to A$ from Theorem \ref{t:dual} (for the Hilbert space $A$ we identify in the standard way $A^*$ with $A$). This will imply that that $D_{y^*}$ is absolutely summing and thus will complete the proof.

Denote by $\langle\cdot\,,\cdot\rangle$ the inner product in the Hilbert space $A$. By the definition, $\langle D_{y^*}x, a \rangle = y^*(ax)$ for all $a \in A$ and $x \in X$. Now, the expansion of $D_{y^*}x$ with respect to the orthonormal basis  $(e_n)_{n\in\w}$ gives us the desired formula
$$
D_{y^*}x = \sum_{n\in\w}\langle D_{y^*}x, e_n \rangle e_n = \sum_{n\in\w}  y^*(e_nx) e_n = T_2(T_1 x).
$$
\end{proof}

\begin{remark} The Banach action $\ell_2\times\IR\to\IR$, $(a,x)\mapsto\sum_{n\in\w}\frac{a(n)x}{n+1}$, preserves the unconditional convergence but for every nonzero $x\in\IR$ the series $\sum_{n\in\w}e_nx=\sum_{n\in\w}\frac{x}{n+1}$ diverges. This example shows that the weak absolute convergence of the series $\sum_{n\in\w}e_nx$ in Theorem~\ref{t:U} is not necessary for the preservation of unconditional convergence by a Banach action $\ell_2\times X\to Y$.
\end{remark}

\section{Acknowledgements}

The authors express their sincere thanks to Alex Ravsky for fruitful discussions that motivated the authors to investigate the problem of preservation of unconditional convergence by Banach actions, to Fedor Nazarov for the suggestion to apply Hadamard matrices in the proof of Theorem~\ref{t:main2}(vi) (which was included in the preceding  \href{https://arxiv.org/abs/2111.14253}{arXiv:2111.14253v1} version of this paper), and to Daniel Pellegrino for suggesting the idea of the proof of Corollary~\ref{c:main} using results of Bennett \cite{Bennett} on Schur multipliers and their relations to absolutely summing operators. After analyzing the approach of Pellegrino, we found an alternative proof of Corollary~\ref{c:main} (and more general Theorem~\ref{t:main2}), based on the results of Garling \cite{Garling}.

The research of the second author was supported by the National Research Foundation of Ukraine funded by Ukrainian State budget as part of the project 2020.02/0096 ``Operators in infinite-dimensional spaces: the interplay between geometry, algebra and topology''.
\newpage

\end{document}